\newtheorem{thm}{Theorem}[section]
\newtheorem{pr}[thm]{Proposition}
\numberwithin{equation}{section}
\begin{document}

\begin{frontmatter}

%% Title, authors and addresses

%% use the tnoteref command within \title for footnotes;
%% use the tnotetext command for theassociated footnote;
%% use the fnref command within \author or \address for footnotes;
%% use the fntext command for theassociated footnote;
%% use the corref command within \author for corresponding author footnotes;
%% use the cortext command for theassociated footnote;
%% use the ead command for the email address,
%% and the form \ead[url] for the home page:
%% \title{Title\tnoteref{label1}}
%% \tnotetext[label1]{}
%% \author{Name\corref{cor1}\fnref{label2}}
%% \ead{email address}
%% \ead[url]{home page}
%% \fntext[label2]{}
%% \cortext[cor1]{}
%% \address{Address\fnref{label3}}
%% \fntext[label3]{}

\title{Limit theorems on counting measures for a branching random walk  with immigration in a random environment}

%% use optional labels to link authors explicitly to addresses:
\author[label1]{Mengxue Li}
\ead{}
\author[label2]{Chunmao Huang}
\ead{cmhuang@hitwh.edu.cn}

\author[label1]{Xiaoqiang Wang}
\ead{xiaoqiang.wang@sdu.edu.cn}
\address[label1]{School of Mathematics and Statistics, Shandong University, Weihai, Shandong, 264209, China}

 \address[label2]{Department of Mathematics, Harbin Institute of Technology (Weihai), Shandong, 264209, China}

%\tnotetext[a]{Supported
%    by the National Natural Science Foundation of China (Nos. 11601286 and 11501146). }
%% \tnotetext[label1]{}
%% \author{Name\corref{cor1}\fnref{label2}}
%% \ead{email address}
%% \ead[url]{home page}
%% \fntext[label2]{}
%\cortext[b]{Corresponding author}
%% \address{Address\fnref{label3}}
%% \fntext[label3]{}

\begin{abstract}
We consider a branching random walk with immigration in a  random environment, where the  environment is a stationary and ergodic sequence indexed by time. We focus on  the asymptotic properties of the sequence of measures $(Z_n)$ that count the number of particles of generation $n$ located in a Borel set of real line. In the present work, a series of limit theorems related to the above counting measures are established, including a central limit theorem, a moderate deviation principle and a large deviation result as well as a convergence theorem of the free energy.

\end{abstract}

%%Graphical abstract
%\begin{graphicalabstract}
%\includegraphics{grabs}
%\end{graphicalabstract}

%%Research highlights
%\begin{highlights}
%\item Research highlight 1
%\item Research highlight 2
%\end{highlights}

\begin{keyword}
%% keywords here, in the form: keyword \sep keyword
Branching random walk with immigration\sep random environment\sep central limit theorem \sep large deviation \sep moderate deviation %\sep convergence rates

%% PACS codes here, in the form: \PACS code \sep code

%% MSC codes here, in the form: \MSC code \sep code
%% or \MSC[2008] code \sep code (2000 is the default)
\MSC[2010] 60J80 \sep 60K37 \sep 60F05 \sep 60F10

\end{keyword}

\end{frontmatter}

%% \linenumbers

%% main text
\section{Introduction and main results}
As one of the frontier field of stochastic processes, the branching random walk with a random environment in time (BRWRE) has been making a lot of progress in recent years. For instance, Gao and Liu (2018)\cite {gl18} generalized the asymptotic expansions in the central theorem for BRWRE and obtained related results in the second and third orders. Wang and Huang (2019)\cite {wlll19} gave the sufficient and necessary conditions for existence of quenched moments as well as weighted moments of BRWRE. Huang {\it et al.}(2019) \cite{huang20} established large and moderate deviations of BRWRE in high-dimensional real space. 
%% 另外一种表达：In recent years, branching random walk with a random environment environment in time(BRWRE), one of the cutting-edge field of stochastic processes, has been making great progress (see e.g.\cite {wlll19}\cite {gl18}\cite{Huang19}) including large and moderate deviations of this process in $\mathbb{R}_d$, quenched moments as well as weighted moments of the limit of martingale, and  the higher order asymptotic expansions in the central limit theorem. 
As an extension of BRWRE, a branching random walk with immigration in  random environment in time (BRWIRE) also has attracted extensive attention. However there are too few relevant  results constraining some applications, hence it is interesting to study BRWIRE both in theory and in applications. In this paper we consider a branching random walk with immigration in a time-dependent random environment. Let $\xi=(\xi_n)$ representing the random environment in time be a stationary and ergodic sequence of random variables of distribution $\tau$. Suppose that each realization of $\xi_n$ is related to two distributions $\eta_n=\eta(\xi_n)$ and $\iota_n=\iota (\xi_n)$ on $\mathbb{N}\times\mathbb{R}.$

Given the environment $\xi$, the process can be described as follows. At time 0, there exists an initial particle $\emptyset$ of generation 0, located at $S_\emptyset=0\in \mathbb{R}.$ At time 1, it is replaced by new born particles with the relative displacements $L_i=L_i(\emptyset), i=1,2,\cdots,N(\emptyset),$ where $N(\emptyset)$ is the number of offspring of $\emptyset$. At the same time, $V_0$ new immigrants come and join located at $S_{0_0i},i=1,2,\cdots,V_0.$ All of the new born particles and new immigrants make up the first generation of particles. In general, each particle $u$ of generation $n$ with locations $S_u$ has $N(u)$ offspring $ui, i=1,2,\cdots,N(u),$ located at $S_{ui}=S_u+L_i(u).$ At the same time, $V_n$ new immigrants $0_ni, i=1,2,\cdots,V_n$ come and join with locations $S_{0_ni}$. Then all the offspring and new immigrants form the particles of generation $n+1$. In addition, we define that the random vector $X(u)=(N(u),L_1(u),L_2(u),\cdots)$ is of distribution $\eta_n$ and that the random vector $Y_n=(V_n,S_{0_n1},S_{0_n2},\cdots)$ is of distribution $\iota_n$. And they are all independent of each other conditional on $\xi.$
Let $\mathbb{P}_\xi$ so-called quenched law be the conditional probability given the environment $\xi$. The total probability can be written as $\mathbb{P}(\mathrm dx,\mathrm d\xi)=\mathbb{P}_\xi(\mathrm dx) \tau(\mathrm d\xi)$ and it is usually called the annealed law. Also, let $\mathbb{P}_{\xi,Y}$ be the conditional probability given the environment $\xi$ and the immigrant sequence $Y=(Y_n)$.  $\mathbb E_{\xi,Y}$, $\mathbb E_\xi $ and $\mathbb E$ denote the expectations with respect to $\mathbb{P}_{\xi,Y}$, $\mathbb{P}_\xi$ and $\mathbb P$ respectively.

Let $\mathbb{T}$ be the family tree with defining elements $\{N(u)\}$ and $\mathbb{T}_n=\{u\in\mathbb{T}:|u|=n\}$ be the set of particles of generation $n$ with $|u|$ representing the length of $u$. For $n \in \mathbb{N},$ define 
$$Z_n(\cdot)=\sum_{u\in\mathbb{T}_n}\delta_{S_u}(\cdot),$$
the counting measure of particles of generation $n$. In this paper, we concentrate on the asymptotic properties of the sequence of counting measures $(Z_n)$ via establishing limit theorems. Similarly, let $\mathbb{T}^X$ be the family tree without immigration and $\mathbb{T}_n^X=\{u\in\mathbb{T}^X:|u|=n\}$ be the set of particles of generation $n$ originated from the initial particle $\emptyset$. For convenience, we use the symbol  \lq{}$-$\rq{} for notations without immigration. According to the Laplace transform of $Z_n(\cdot)$, for $n \in \mathbb{N}$ and $t\in\mathbb{R},$ we define 
\begin{equation}
\tilde Z_n(t)=\sum_{u\in\mathbb T_n}e^{tS_u},\quad
W_n(t)=\frac{\tilde Z_n(t)}{\Pi_n(t)},\quad
\bar Z_n(t)=\sum_{u\in\mathbb T^X_n}e^{tS_u}\quad\text{and}\quad \bar W_n(t)=\frac{\bar Z_n(t)}{\Pi_n(t)}
\end{equation}
\begin{equation}
m_n(t)=\mathbb E_\xi\sum_{i=1}^{N(u)}e^{tL_i(u)}\;\; (u\in\mathbb {N^*}^n),\quad Y_n(t)=\sum_{i=1}^{V_n}e^{tS_{0_ni}},\quad
\Pi_0(t)=1\quad\text{and}\quad \Pi_n(t)=\prod_{i=0}^{n-1}m_i(t)\quad (n\geq 1),
\end{equation}
In particular, set $m_n(0)=m_n$, $\Pi_n=\Pi_n(0)$, $W_n=W_n(0)$ and $\bar W_n=\bar W_n(0)$  for short. We can see that $\bar W_n(t)$ is the intrinsic non-negative martingale of BRWRE, hence it converges almost surely  (a.s.) to some limit $\bar W(t)$ with $\mathbb E_\xi\bar W(t)\leq 1$. Also, it is not difficult to verify that $W_n(t)$ is a non-negative sub-martingale with respect to the probability $\mathbb P_{\xi, Y}$. We consider that $(Z_n)$ is a supercritical branching process i.e. $\mathbb E\log m_0>0$. Based on the necessary moment conditions (for example, $\mathbb E\log^+ V_0<\infty$ and $\mathbb E\log^+\mathbb E_\xi N^p<\infty$ for some $p>1$), the sub-martingale $W_n$ converges a.s. to a limit $W\in(0,\infty)$. To study the intrinsic properties and for simplicity, we further assume that 
\begin{equation}\label{3}
\mathbb P_\xi(N=0)=0,\qquad  \mathbb P_\xi(N=1)<1\qquad  \text{and}\qquad \mathbb E_\xi \sum_{i=1}^{N}L_i=0\qquad a.s..
\end{equation}

Firstly, we establish a central limit theorem concerning $(Z_n)$. Set 
$$\sigma_n^2=\frac{1}{m_n}\mathbb E_\xi \sum_{i=1}^{N(u)}L_i(u)^2 \quad (u\in\mathbb {N^*}^n) \quad \text{and} \quad b_n=\left(\sum_{i=0}^{n-1}\sigma_i^2\right)^{1/2}$$.

\begin{thm}[Central limit theorem]\label{tclt}
 If $\mathbb{E}\sigma_0^2 \in(0,\infty)$, $\mathbb E\log^+{Y_0(\pm\delta)}<\infty$ for some $\delta>0$ and  $\mathbb E\log^+\mathbb E_\xi  N^p<\infty$ for some $p>1$,
then  a.s., 
 \begin{equation*}\label{LTET10.2}
\frac{ Z_n(-\infty,b_nx]}{ Z_n(\mathbb{R})}\rightarrow  \Phi(x)\qquad  \forall x\in\mathbb{R},
\end{equation*}
  where $\Phi(x)=\frac{1}{\sqrt{2\pi}}\int_{-\infty}^x e^{-t^2/2} \mathrm dt$.
\end{thm}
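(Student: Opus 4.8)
The plan is to reduce the central limit theorem for the counting measure $Z_n$ to a statement about the Laplace transforms $\tilde Z_n(t)$ via the standard Fourier/moment-generating technique. Specifically, for fixed real $\lambda$ one would study the quantity
\[
\frac{1}{Z_n(\mathbb{R})}\sum_{u\in\mathbb{T}_n}e^{i\lambda S_u/b_n},
\]
and show that it converges a.s.\ to $e^{-\lambda^2/2}$; a continuity-theorem argument (together with the monotonicity of distribution functions and the continuity of $\Phi$) then upgrades weak convergence of the random probability measures $Z_n(\cdot)/Z_n(\mathbb{R})$ rescaled by $b_n$ to the pointwise convergence of distribution functions claimed in the statement. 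Since $Z_n(\mathbb{R})=\tilde Z_n(0)$ and, after centering is free because of the assumption $\mathbb{E}_\xi\sum_i L_i=0$, the natural object is the normalized sum $W_n(i\lambda/b_n)=\tilde Z_n(i\lambda/b_n)/\Pi_n(i\lambda/b_n)$, together with the analogous quantity $\tilde Z_n(0)=\Pi_n W_n$.

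The key steps, in order, are: (i) write $\frac{1}{Z_n(\mathbb{R})}\sum_{u\in\mathbb{T}_n}e^{i\lambda S_u/b_n}=\frac{\Pi_n(i\lambda/b_n)}{\Pi_n}\cdot\frac{W_n(i\lambda/b_n)}{W_n}$; (ii) control the deterministic factor $\Pi_n(i\lambda/b_n)/\Pi_n=\prod_{k=0}^{n-1} m_k(i\lambda/b_n)/m_k$ by a second-order Taylor expansion of $m_k(\cdot)$ near $0$, using $m_k'(0)/m_k=\mathbb{E}_\xi\sum_i L_i(u)/m_k=0$ and $m_k''(0)/m_k=\sigma_k^2$, so that $\log\big(m_k(i\lambda/b_n)/m_k\big)\approx -\tfrac{\lambda^2}{2}\sigma_k^2/b_n^2$; summing over $k<n$ and recalling $b_n^2=\sum_{k<n}\sigma_k^2$ gives $\Pi_n(i\lambda/b_n)/\Pi_n\to e^{-\lambda^2/2}$ a.s., where one also needs $b_n\to\infty$ a.s.\ (from $\mathbb{E}\sigma_0^2\in(0,\infty)$ and ergodicity, $b_n^2/n\to\mathbb{E}\sigma_0^2>0$) to justify that the Taylor remainder is negligible; (iii) show $W_n(i\lambda/b_n)/W_n\to 1$ a.s.\ — here $W_n\to W\in(0,\infty)$ a.s.\ is already granted, so the real content is that the complex perturbation $W_n(i\lambda/b_n)$ has the same limit $W$ as $W_n(0)$, which follows from a uniform (in the perturbation parameter) $L^1$ or a.s.\ bound on the martingale-type differences $W_n(i\lambda/b_n)-W_n$, exploiting $|e^{i\lambda S_u/b_n}-1|\le |\lambda S_u|/b_n$ and the moment hypotheses $\mathbb{E}\log^+\mathbb{E}_\xi N^p<\infty$, $\mathbb{E}\log^+Y_0(\pm\delta)<\infty$.

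The main obstacle is step (iii): establishing that the immigration-perturbed, complex-valued normalization $W_n(t)$ with $t=i\lambda/b_n\to 0$ converges to the \emph{same} limit $W$ as $W_n$. Without immigration this is a routine "continuity of the martingale limit in the parameter" argument for $\bar W_n(t)$, but the immigration terms accumulate: $W_n(t)=\bar W_n(t)+\sum_{k}(\text{contribution of immigrants at generation }k)/\Pi_n(t)$, and one must show these immigration contributions, after the $b_n$-rescaling, behave uniformly well as $t\to 0$. I would handle this by first proving a decomposition of $W_n(t)$ into the no-immigration martingale plus a sum of rescaled immigrant subtree martingales, then applying the moment bound $\mathbb{E}\log^+Y_0(\pm\delta)<\infty$ to control the immigrant input and the known a.s.\ convergence $W_n\to W\in(0,\infty)$ to control the denominators, so that a dominated-convergence argument conditional on $\xi$ yields $W_n(i\lambda/b_n)/W_n\to1$ a.s. Combining (ii) and (iii) gives the characteristic-function limit $e^{-\lambda^2/2}$ and hence the theorem.
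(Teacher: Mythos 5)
Your proposal follows essentially the same route as the paper's proof: reduce the claim to a.s.\ convergence of the normalized characteristic function, decompose it via the initial particle's subtree together with the immigrant subtrees (the decomposition $W_n(t)=\bar W_n(t)+\sum_k\Pi_k(t)^{-1}\sum_i\bar W_{n-k}(0_{k-1}i,t)e^{tS_{0_{k-1}i}}$), apply the central limit theorem for BRWRE to each subtree martingale, and pass to the limit by dominated convergence using the a.s.\ summability of the immigrant contributions. The only real difference is presentational: you pull out the deterministic factor $\Pi_n(i\lambda/b_n)/\Pi_n$ (Taylor expansion using $m_k'(0)=0$, $m_k''(0)/m_k=\sigma_k^2$) and then treat $W_n(i\lambda/b_n)/W_n$ separately, whereas the paper keeps them bundled as $\Psi_n(t/b_n)=\Pi_n^{-1}\tilde Z_n(\mathbf it/b_n)$ and invokes the Huang--Liu BRWRE CLT for $\bar\Psi_n(t/b_n)\to g(t)\bar W(0)$ directly; one caveat worth noting is that the double-limit statement $\bar W_n(i\lambda/b_n)\to\bar W(0)$, which you label ``routine continuity of the martingale limit in the parameter,'' is in fact the technical core of that cited theorem (it requires uniform-in-$t$ $L^p$ control of the complex martingale near $t=0$, not mere pointwise continuity of the limit), so it should be cited rather than dismissed.
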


We next establish a moderate deviation principle associated to $(Z_n)$.   Let $a_n=n^\alpha$ with $\alpha\in(0.5,1)$.

\begin{thm}[Moderate deviation principle]\label{MDP} 
 If $\mathbb E \frac{1}{m_0}\sum_{i=1}^N e^{\delta|L_i|}<\infty$, $\mathbb E\log^+{Y_0(\pm\delta)}<\infty$  and $\mathbb E\log^+\mathbb E_\xi(\sum_{i=1}^Ne^{\delta|L_i|})^p<\infty$ for some  $\delta>0$ and $p>1$,
then it is a.s.   that the sequence of probabilities
$A \mapsto  {Z_n(a_nA)}/Z_n(\mathbb R)$ satisfies a principle of moderate deviation with
rate function $\frac{x^2}{2\sigma^2}$: for each measurable subset $A$ of
$\mathbb{R}$, %the sequence of finite measures
%$A \mapsto  {Z_n(a_nA)}/Z_n(\mathbb R)$ satisfies a principle of moderate deviation with rate function $\frac{x^2}{2\sigma^2} $:
%for each measurable subset $A$ of $\mathbb{R}$,
\begin{eqnarray*}\label{LDP1}
   - \frac{1}{2\sigma^2}\inf_{x\in A^\circ} x^2
   \leq \liminf_{n\rightarrow \infty}
             \frac{n}{a_n^2}  \log \frac{Z_n (a_nA)}{Z_n(\mathbb R)} 
   \leq  \limsup_{n\rightarrow \infty}
              \frac{n}{a_n^2}  \log \frac{Z_n (a_nA)}{Z_n(\mathbb R)}
   \leq - \frac{1}{2\sigma^2}\inf_{x\in \bar A} x^2,
\end{eqnarray*}
%a.s. on $\{Z_n(\mathbb R)\rightarrow\infty\}$,
where  $\sigma^2=\mathbb E\sigma_0^2$,  $A^\circ$ denotes the interior of $A$, and $\bar A$ its closure.
\end{thm}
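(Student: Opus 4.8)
The plan is to reduce the statement about the random counting measures $Z_n$ to a moderate deviation statement for the classical random walk $S_u$ along a "typical" spine, weighted by the martingale limit, and then to control the immigration contribution so that it does not affect the exponential scale. First I would introduce the tilted partition function $\tilde Z_n(t)=\sum_{u\in\mathbb T_n}e^{tS_u}$ and its normalization $W_n(t)=\tilde Z_n(t)/\Pi_n(t)$ from the excerpt, and rescale the tilting parameter as $t=t_n:=\lambda a_n/n$ with $\lambda\in\mathbb R$ fixed. The key object is the normalized Laplace transform
\begin{equation*}
\Lambda_n(\lambda):=\frac{n}{a_n^2}\log\frac{\tilde Z_n(\lambda a_n/n)}{Z_n(\mathbb R)}
=\frac{n}{a_n^2}\log\frac{\Pi_n(\lambda a_n/n)\,W_n(\lambda a_n/n)}{\Pi_n W_n}.
\end{equation*}
A Gärtner–Ellis argument then applies: once I show $\Lambda_n(\lambda)\to \tfrac{\sigma^2\lambda^2}{2}$ a.s.\ for every $\lambda$, the Legendre transform gives exactly the rate function $x^2/(2\sigma^2)$, and the upper and lower bounds for $Z_n(a_nA)/Z_n(\mathbb R)$ follow by the standard Chebyshev/change-of-measure inequalities (upper bound directly from Gärtner–Ellis, lower bound from a local estimate around the optimal tilt, as in Huang {\it et al.}\ \cite{huang20}).

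The main analytic step is therefore the asymptotics of $\log\Pi_n(\lambda a_n/n)$ and the control of the martingale factors. For the deterministic-looking factor, I would Taylor expand $\log m_k(t)$ around $t=0$: using $\mathbb E_\xi\sum_i L_i=0$ from \eqref{3} one has $\log m_k(t)=\log m_k+\tfrac12\sigma_k^2 m_k^{-1}\cdots$ — more precisely $\partial_t^2\log m_k(t)|_{t=0}=\sigma_k^2$ — so that
\begin{equation*}
\log\frac{\Pi_n(\lambda a_n/n)}{\Pi_n}=\sum_{k=0}^{n-1}\Big(\log m_k(t_n)-\log m_k\Big)
=\frac{\lambda^2 a_n^2}{2n^2}\sum_{k=0}^{n-1}\sigma_k^2+o\!\left(\frac{a_n^2}{n}\right),
\end{equation*}
and by the ergodic theorem $\tfrac1n\sum_{k=0}^{n-1}\sigma_k^2\to\mathbb E\sigma_0^2=\sigma^2$ a.s.; multiplying by $n/a_n^2$ yields $\tfrac{\sigma^2\lambda^2}{2}$. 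Here the moment hypothesis $\mathbb E\frac1{m_0}\sum_i e^{\delta|L_i|}<\infty$ is what makes the second-order Taylor remainder uniformly integrable so that the error is genuinely $o(a_n^2/n)$; since $t_n\to0$, for $n$ large $|t_n|<\delta$ and the third derivative of $\log m_k(t)$ is bounded in a deterministic-in-probability sense after using the ergodic theorem on the relevant moment functionals.

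It remains to handle the two $W$-factors and the immigration. Since $t_n\to0$, I would show $W_n(t_n)\to W$ and in particular $\tfrac{n}{a_n^2}\log W_n(t_n)\to0$ a.s.; for the no-immigration martingale $\bar W_n(t)$ this is a continuity/uniform-integrability statement near $t=0$ (using $\mathbb E\log^+\mathbb E_\xi(\sum_i e^{\delta|L_i|})^p<\infty$ to get $L^p$-boundedness on a neighborhood of $0$), and the immigration part is absorbed because $W_n=\bar W_n+\sum_{k}\Pi_k^{-1}(\text{contribution of immigrants at time }k)$ and the hypothesis $\mathbb E\log^+Y_0(\pm\delta)<\infty$ together with $\mathbb E\log m_0>0$ forces the immigration series to converge, so $Z_n(\mathbb R)\asymp \Pi_n W$ up to subexponential fluctuations; the same bound with $t_n$ in place of $0$ controls $\tilde Z_n(t_n)$. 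The hardest part, I expect, is precisely this last point: showing that the immigrants do not contribute on the scale $a_n^2/n$, i.e.\ that $\tfrac{n}{a_n^2}\log\big(W_n(t_n)/W_n\big)\to0$ a.s.\ \emph{uniformly enough} in $\lambda$, because the immigrants entering at generation $k$ start a fresh branching random walk whose own tilted mass can be large; the resolution is to bound each such mass by its expectation via the quenched moment estimates of Wang and Huang \cite{wlll19} and to sum the resulting geometric-type series, exploiting that $\log\Pi_k$ grows linearly while $\log Y_k(\pm\delta)$ is only logarithmically large along the sequence. Once these estimates are in place, Gärtner–Ellis delivers the claimed inequalities with $\sigma^2=\mathbb E\sigma_0^2$.
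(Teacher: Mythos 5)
Your overall strategy coincides with the paper's: introduce $\Gamma_n(t)=\log\frac{\int e^{tx}Z_n(a_n\,dx)}{Z_n(\mathbb R)}$, split it as $\frac{n}{a_n^2}\log\frac{\Pi_n(a_nt/n)}{\Pi_n}+\frac{n}{a_n^2}\bigl[\log W_n(a_nt/n)-\log W_n\bigr]$, show the first summand converges a.s.\ to $\tfrac12\sigma^2t^2$ and the second to $0$, then apply G\"artner--Ellis. Your Taylor-plus-ergodic derivation of the $\Pi_n$-asymptotics is essentially a self-contained proof of the lemma the paper cites from Huang et al., so that part is fine and even slightly more explicit than the paper's.

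Where I see a genuine gap is in the martingale factor. You correctly recognize that since $t_n=\lambda a_n/n$ is a moving sequence tending to $0$, a pointwise statement \lq\lq $W_n(t)\to W(t)$ for each fixed $t$\rq\rq\ is not enough and you need uniformity in $\lambda$, but the mechanism you propose --- bound the tilted mass of each immigrant's subtree by its expectation and sum the geometric series --- is a pointwise-in-$t$ argument (that is the content of the paper's bound analogous to \eqref{CRE 2.2.18}) and does not by itself yield the required uniform control. The paper's actual device is different and is the crux of the proof: it proves a.s.\ \emph{uniform} convergence of $W_n(t)$ on a fixed interval $D_\varepsilon=[-\varepsilon,\varepsilon]$ by bounding $\sup_{t\in D_\varepsilon}|W_{n+1}(t)-W_n(t)|\le|W_{n+1}-W_n|+\int_{D_\varepsilon}|W_{n+1}'(t)-W_n'(t)|\,dt$, controlling the first term via Proposition~\ref{lp} and the second by differentiating the decomposition \eqref{dss} and proving $\sum_n\sup_{t\in D_\varepsilon}\bigl(\mathbb E_{\xi,Y}|W_{n+1}'(t)-W_n'(t)|^p\bigr)^{1/p}<\infty$ a.s., where the immigrants enter through the weights $\tilde Y_n(t)$. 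This derivative estimate is precisely where the hypotheses $\mathbb E\log^+Y_0(\pm\delta)<\infty$ and $\mathbb E\log^+\mathbb E_\xi(\sum_ie^{\delta|L_i|})^p<\infty$ do their real work, and it is the step missing from your plan. Without it (or an equivalent equicontinuity argument), the passage from fixed-$t$ convergence to $W_n(t_n)\to W$ along $t_n\to0$ is unjustified. A smaller remark: what you really need is only that $\log W_n(t_n)-\log W_n$ stays a.s.\ bounded, since $n/a_n^2\to0$, but the cleanest way to get even that boundedness is again the uniform convergence argument, so it does not let you avoid the work.
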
 

Finally, we consider large deviations of $\log Z_n$. We shall begin with the study of convergence of the free energy $\frac{\log \tilde{Z}_n}{n}$.
Assume that
\begin{equation} \label{H1}
 E|L_1| < \infty, \quad
\mathbb{E}|\log m_0 (t)|<\infty,\quad  \mathbb{E}\left|\frac{m_0' (t)}{m_0 (t)} \right|<\infty\quad\text{and}\quad\mathbb E|\log Y_0(t)|<\infty
\end{equation}
for all $t\in \mathbb{R}$. Therefore, the function $\Lambda (t):=\mathbb E\log m_0(t)$
is well-defined and differentiable everywhere on $\mathbb{R}$.
%Let
%\begin{equation*}
%$\Lambda^* (x) = \sup_{t\in \mathbb{R}} \{ xt -  \Lambda (t)\}$ %$(x\in\mathbb R)$
%\end{equation*}
%be the Legendre transform of $\Lambda$. 
Define
\begin{equation}\label{LS1.2t}
 t_-= \inf \{ t \in \mathbb{R}:t\Lambda'(t)-\Lambda(t) \leq 0\} \quad \text{and}\quad
t_+= \sup \{ t \in \mathbb{R}:  t\Lambda'(t)-\Lambda(t) \leq 0\}.
\end{equation}
%Notice that $ -\infty \leq t_- <0 < t_+ \leq \infty$, $t_-$ and $t_+$ aretwo solutions of $\rho (t) = 0$ if they arefinite. 
For BRWRE, it is known from \cite[Theorem 3.1]{hl14} that a.s. for all $t\in\mathbb{R}$,
\begin{equation}\label{LTE3.3}
\lim_{n\rightarrow \infty} \frac{1}{n}\log \bar Z_n(t) =
\bar\Lambda (t) :=
   \left\{
   \begin{array} {lll}
          \Lambda (t)  & if &   t\in (t_-, t_+) ,\\
            t\Lambda'(t_+) & if & t\geq t_+, \\
            t\Lambda'(t_-) & if & t\leq t_-.
   \end{array}
     \right.
\end{equation}
\begin{thm}[Convergence of the free energy]\label{tfn}
Assume  \eqref{H1}. Let $\tilde\Lambda (t) :=\max\{\bar \Lambda(t),0\}$.
It is a.s.   that for all $t\in\mathbb{R}$,
\begin{equation}\label{fn}
\lim_{n\rightarrow \infty} \frac{1}{n}\log \tilde Z_n(t) =
\tilde\Lambda (t).
\end{equation}
\end{thm}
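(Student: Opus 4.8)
The plan is to sandwich $\tilde Z_n(t) = \sum_{u\in\mathbb T_n} e^{tS_u}$ between quantities whose logarithmic growth rate is already known, namely the BRWRE partition functions $\bar Z_k(t)$ from \eqref{LTE3.3}. For the lower bound, I would simply observe that the initial particle $\emptyset$ generates a copy of a BRWRE inside the branching random walk with immigration, so $\tilde Z_n(t) \geq \bar Z_n(t)$; combined with the trivial bound $\tilde Z_n(t) \geq 1$ (or rather $\geq Y_{n-1}(t) \ge $ the contribution of at least one immigrant at generation $n-1$ branching once — more carefully one keeps a single surviving line so $\tilde Z_n(t)\ge$ a positive quantity bounded below), this yields $\liminf_n \frac1n\log\tilde Z_n(t) \geq \max\{\bar\Lambda(t),0\} = \tilde\Lambda(t)$ a.s. The clean way to get the $0$ part is: each immigrant family at generation $j<n$ evolves as an independent BRWRE for the remaining $n-j$ generations, and since $\mathbb E\log m_0>0$ at least one of these survives and proliferates, forcing $\tilde Z_n(t)\ge e^{o(n)}$ trivially, but in fact one wants $\ge 1$ eventually which is immediate once one immigrant line never dies (guaranteed a.s.\ on a supercritical event, and globally by taking more immigrants).

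The substance is the upper bound. I would decompose $\tilde Z_n(t)$ according to the generation $j$ at which each particle's ancestral line last passed through an immigrant (with $j=0$ meaning the line descends from $\emptyset$). This gives
\begin{equation*}
\tilde Z_n(t) \;=\; \bar Z_n^{(\emptyset)}(t) \;+\; \sum_{j=1}^{n} \sum_{i=1}^{V_{j-1}} e^{tS_{0_{j-1}i}}\, \bar Z_{n-j}^{(j,i)}(t),
\end{equation*}
where $\bar Z_{n-j}^{(j,i)}(t)$ is the BRWRE partition function after $n-j$ further generations rooted at the $i$-th immigrant of generation $j$, and these are, conditionally on $\xi$ and $Y$, independent copies (with shifted environment) of $\bar Z_{n-j}(t)$. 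Writing $Y_{j-1}(t)=\sum_{i=1}^{V_{j-1}}e^{tS_{0_{j-1}i}}$ one is led to control $\sum_{j=0}^{n} Y_{j-1}(t)\,\bar Z_{n-j}^{(\cdot)}(t)$ (with the $j=0$ term interpreted as $\bar Z_n(t)$). Taking logarithms and using that a sum of $n+1$ terms has log at most $\log(n+1)$ plus the log of the max, it suffices to show that a.s.
\begin{equation*}
\limsup_{n\to\infty} \frac1n \max_{0\le j\le n}\Big( \log^+ Y_{j-1}(t) + \log \bar Z_{n-j}^{(\cdot)}(t)\Big) \;\le\; \tilde\Lambda(t).
\end{equation*}
Here $\log Y_{j-1}(t)=o(j)$ a.s.\ by the Borel–Cantelli / ergodic argument coming from the hypothesis $\mathbb E|\log Y_0(t)|<\infty$ in \eqref{H1}, so the $Y$-terms are asymptotically negligible on the scale $n$; and each $\frac1{n-j}\log\bar Z_{n-j}^{(\cdot)}(t) \to \bar\Lambda(t)$. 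The point where care is genuinely needed is the uniformity of the convergence $\frac1k\log\bar Z_k(t)\to\bar\Lambda(t)$ over the whole family of shifted environments simultaneously as $j$ ranges over $0,\dots,n$ — one cannot just invoke \eqref{LTE3.3} pointwise for each shift because there are infinitely many and the exceptional null sets could accumulate.

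I expect this uniformity to be the main obstacle, and I would handle it by first establishing an annealed exponential (or polynomial) moment bound, e.g.\ $\mathbb E\big[\big(\bar Z_k(t)/e^{k(\bar\Lambda(t)+\varepsilon)}\big)\wedge 1\big]$ or a suitable $L^p$-type estimate decaying geometrically in $k$, then summing over $k$ and over the finitely many relevant shifts at each scale and applying Borel–Cantelli to get, for each $\varepsilon>0$, that a.s.\ $\max_{0\le j\le n}\big(\frac1n\log\bar Z_{n-j}^{(\cdot)}(t)\big)\le \bar\Lambda(t)+\varepsilon$ for all large $n$; stationarity and ergodicity of $\xi$ keep the distribution of each shifted $\bar Z_k(t)$ under control. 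A standard diagonalization over a countable dense set of $t$, together with monotonicity/convexity of $t\mapsto\tilde Z_n(t)$ and continuity of $\tilde\Lambda$ on $\mathbb R$ (inherited from the differentiability of $\Lambda$ asserted after \eqref{H1}), upgrades the a.s.\ convergence from fixed $t$ to ``a.s., for all $t$'', completing the proof. Combining the two bounds gives \eqref{fn}.
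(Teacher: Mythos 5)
Your lower bound and your decomposition of $\tilde Z_n(t)$ by the last immigration time are sound (the decomposition is exactly \eqref{dss}), and you have correctly located the hard part: controlling $\max_{0\le j\le n}\frac1n\log\bar Z_{n-j}^{(j)}(t)$ uniformly over the shifted environments. But the proposed resolution of that obstacle is where the gap is. For $t$ strictly outside $(t_-,t_+)$ one has $\bar\Lambda(t)<\Lambda(t)$, while $\mathbb E_\xi\bar Z_k(t)=\Pi_k(t)\approx e^{k\Lambda(t)}$; so any ordinary (annealed or quenched) moment bound on $\bar Z_k(t)$ only yields rate $\Lambda(t)$, which is too weak. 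An $L^p$ estimate with $p>1$ is worse still, and the ``truncated moment'' $\mathbb E\big[(\bar Z_k(t)/e^{k(\bar\Lambda(t)+\varepsilon)})\wedge1\big]$ that you gesture at is essentially $\mathbb P(\bar Z_k(t)>e^{k(\bar\Lambda(t)+\varepsilon)})$, and bounding that \emph{is} the hard part of the upper bound in \eqref{LTE3.3}: one needs a Chernoff/rightmost-particle estimate for the BRWRE, not a moment bound. In short, your plan does not avoid the rightmost-particle argument; it merely hides it behind the ``annealed moment bound'' placeholder, and on top of that you would then have to run a double Borel--Cantelli (over $k$ and over the $O(n)$ environment shifts) and reassemble. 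The plan is therefore incomplete as written.

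The paper's proof is shorter because it applies the Chernoff-type argument once, directly to the BRWIRE, rather than to each immigrant subtree. Concretely: the elementary first-moment bound $\mathbb E_{\xi,Y}\tilde Z_n(t)\le e^{n\max\{\Lambda(t),0\}+o(n)}$ already gives $\limsup\frac1n\log\tilde Z_n(t)\le\max\{\Lambda(t),0\}$, which settles $t\in(t_-,t_+)$. For $t\ge t_+$, one instead bounds the \emph{rightmost position} $R_n=\max\{S_u:u\in\mathbb T_n\}$ of the full immigration process: a Chernoff bound via the Legendre transform $\Lambda_+^*$ of $\max\{\Lambda,0\}$ shows $\sum_n\mathbb P_{\xi,Y}(Z_n[na,\infty)\ge1)<\infty$ for $a>\max\{\Lambda'(t_+),0\}$, hence $R_n<na$ eventually a.s.; one then interpolates $\tilde Z_n(t)\le\tilde Z_n(t_0)\,e^{(t-t_0)R_n}$ for $t_0$ near $t_+$ and optimizes. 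This avoids the decomposition entirely, there is no uniformity-over-shifts issue, and there is a single Borel--Cantelli. Also, on the lower bound the paper simply notes $\tilde Z_n(t)\ge Y_{n-1}(t)$ and uses $\mathbb E|\log Y_0(t)|<\infty$ with the ergodic theorem to get $\frac1n\log Y_{n-1}(t)\to0$ a.s.; your discussion of surviving immigrant lines is unnecessary and introduces an extra survival issue you would then have to dispatch. Your ``diagonalize over a countable dense set of $t$, then use convexity in $t$ and continuity of $\tilde\Lambda$'' step for the ``a.s.\ for all $t$'' upgrade is fine and matches what is implicit in the paper.
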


Applying  Theorem \ref{tfn} and the  G\"artner-Ellis theorem, we obtain the following large deviation results.
 Denote
$t_1=\inf\{t\in\mathbb R:\Lambda(t)<0\}$ and $t_2=\sup\{t\in\mathbb R:\Lambda(t)<0\}$.
According to the relationship between $t_1,t_2, t_-$ and $t_+$ and considering the properties of the function $\Lambda(t)$, we can distinguish three cases: 
case I. $t_1=-\infty$ and $t_2=\infty$; case II. $t_1<t_-<t_2<0$; case III. $0<t_1<t_+<t_2$.

\begin{thm}[Large deviations]\label{tld}
Assume \eqref{H1}. Let $\tilde\Lambda^*(x)= \sup_{t\in \mathbb{R}} \{ xt -  \tilde\Lambda (t)\}$ be the Legendre transform of
$\tilde\Lambda(t)$.
\begin{itemize}
\item[(a)]For case I, it is a.s.   that the sequence of measures
$A \mapsto  Z_n(nA)$ satisfies a principle of  large deviation with
rate function $\tilde \Lambda^* (x)$: for each measurable subset $A$ of
$\mathbb{R}$, 
\begin{eqnarray*}\label{LDP1}
   - \inf_{x\in A^\circ} \tilde\Lambda^* (x)
   \leq \liminf_{n\rightarrow \infty}
             \frac{1}{n}  \log Z_n (nA)
  \leq   \limsup_{n\rightarrow \infty}
            \frac{1}{n} \log   Z_n (nA)
   \leq - \inf_{x\in \bar A}  \tilde\Lambda^*(x),
\end{eqnarray*}
%a.s. on $\{Z_n(\mathbb R)\rightarrow\infty\}$,
where  $A^\circ$ denotes the interior of $A$, and $\bar A$ its closure.
\end{itemize}

\item[(b)]For cases II and III, it is a.s. that for each closed subset $F$ of
$\mathbb{R}$,
$$ \limsup_{n\rightarrow \infty}
            \frac{1}{n} \log   Z_n (nF)
   \leq - \inf_{x\in F}  \tilde \Lambda^*(x),$$
and for each open subset $G$ of
$\mathbb{R}$,
$$ \liminf_{n\rightarrow \infty}\frac{1}{n}  \log Z_n (nG)\geq - \inf_{x\in G\cap \mathcal E} \tilde\Lambda^* (x),$$
where $\mathcal E=(\Lambda'(t_2) , \Lambda' (t_+))$ for case I and  $\mathcal E=(\Lambda'(t_-) , \Lambda' (t_1))$ for case III, is the set of exposed points of the rate function $\tilde \Lambda^*$.
\end{thm}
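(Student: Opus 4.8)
The plan is to deduce the large deviation statements from the convergence of the free energy (Theorem \ref{tfn}) by an application of the G\"artner--Ellis theorem. First I would reformulate the problem in terms of the logarithmic moment generating functions. Fix $\omega$ in the almost-sure event on which \eqref{fn} holds simultaneously for all $t\in\mathbb{R}$ (such a single event exists: the limit holds for all rationals, and $\tilde\Lambda$ as well as $\frac1n\log\tilde Z_n(t)$ are convex in $t$, so pointwise convergence on a dense set upgrades to pointwise convergence everywhere, in fact to locally uniform convergence). Consider the random probability measures $\mu_n(\cdot)=Z_n(n\,\cdot)/Z_n(\mathbb{R})$. For $t\in\mathbb{R}$ one has
\begin{equation*}
\frac1n\log\int_{\mathbb{R}}e^{nt\,x}\,\mu_n(\mathrm dx)
=\frac1n\log\frac{\tilde Z_n(t)}{Z_n(\mathbb{R})}
=\frac1n\log\tilde Z_n(t)-\frac1n\log\tilde Z_n(0)
\longrightarrow \tilde\Lambda(t)-\tilde\Lambda(0)=\tilde\Lambda(t),
\end{equation*}
since $\tilde\Lambda(0)=\max\{\bar\Lambda(0),0\}=0$ because $\bar\Lambda(0)=\Lambda(0)=\mathbb{E}\log m_0>0$ would give $\bar\Lambda(0)>0$; wait — more carefully, $\bar\Lambda(0)=\Lambda(0)$ when $0\in(t_-,t_+)$, and $0$ always lies in $[t_-,t_+]$, so $\bar\Lambda(0)=\Lambda(0)>0$ hence $\tilde\Lambda(0)=\Lambda(0)$; thus one should instead center by $\log Z_n(\mathbb{R})$ only up to the factor $e^{n\Lambda(0)}$, or work directly with $Z_n(nA)$ without normalising. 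I will in fact work directly with $\frac1n\log Z_n(nA)$, whose natural ``moment generating'' limit is exactly $\tilde\Lambda(t)$, and apply the G\"artner--Ellis theorem to the sub-probability framework (the Dawson--G\"artner / Baldi form of the theorem for arrays of measures of total mass $\asymp e^{n\Lambda(0)}$), which yields the bounds with rate function $\tilde\Lambda^*$.

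For part (a), case I ($t_1=-\infty$, $t_2=\infty$): here $\Lambda(t)<0$ for all $t$, which forces $t_-=-\infty$ and $t_+=+\infty$ (one checks $t\Lambda'(t)-\Lambda(t)$ changes sign appropriately), so $\bar\Lambda=\Lambda$ everywhere and $\tilde\Lambda=\max\{\Lambda,0\}$ is finite, convex, and differentiable on all of $\mathbb{R}$ except possibly at the two points where $\Lambda=0$; but in case I, $\Lambda<0$ throughout so $\tilde\Lambda\equiv\Lambda$ is differentiable everywhere and hence \emph{essentially smooth} and lower semicontinuous. The G\"artner--Ellis theorem then applies in full strength and gives the full LDP with rate $\tilde\Lambda^*$, matching upper and lower bounds over $\bar A$ and $A^\circ$ respectively. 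The main point to verify is steepness, i.e. that $|\tilde\Lambda'(t)|\to\infty$ as $t$ approaches the boundary of the (here empty) non-differentiability set — trivial here since the effective domain is $\mathbb{R}$ and $\Lambda$ is differentiable everywhere, so essential smoothness follows once we note $\tilde\Lambda$ is lower semicontinuous and its domain has nonempty interior.

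For part (b), cases II and III: now $\tilde\Lambda$ is \emph{not} essentially smooth — it has corners where $\bar\Lambda$ switches between the linear pieces $t\Lambda'(t_\pm)$ and the curved piece $\Lambda(t)$, and further where $\max\{\bar\Lambda,0\}$ switches between $\bar\Lambda$ and $0$. Consequently only the general G\"artner--Ellis conclusions survive: the full upper bound $\limsup\frac1n\log Z_n(nF)\le-\inf_F\tilde\Lambda^*$ for closed $F$ (which needs only the existence of the limit $\tilde\Lambda$ together with its convexity and finiteness on a neighbourhood of $0$, via the standard exponential-Chebyshev argument and the Legendre duality $\tilde\Lambda^{**}=\tilde\Lambda$), and the lower bound $\liminf\frac1n\log Z_n(nG)\ge-\inf_{G\cap\mathcal E}\tilde\Lambda^*$ restricted to the set $\mathcal E$ of exposed points of $\tilde\Lambda^*$ whose exposing hyperplane lies in the interior of the domain of $\tilde\Lambda$. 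I would then identify $\mathcal E$ explicitly: the exposed points are the values $x=\Lambda'(t)$ for those $t$ at which $\Lambda$ is strictly convex and $\tilde\Lambda$ coincides with $\Lambda$ with a strictly supporting line, which after examining the three regimes gives $\mathcal E=(\Lambda'(t_2),\Lambda'(t_+))$ in case I (the genuinely new exposed interval beyond the linear regime contributes nothing, but on $(t_2,t_+)$ one has $\bar\Lambda=\Lambda>0$ hence $\tilde\Lambda=\Lambda$ is strictly convex) and $\mathcal E=(\Lambda'(t_-),\Lambda'(t_1))$ in case III by symmetry.

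The main obstacle will be the last step: the careful case analysis relating $t_1,t_2$ to $t_-,t_+$, verifying on which $t$-intervals $\tilde\Lambda$ equals the strictly convex branch $\Lambda$ (as opposed to a linear branch or the constant $0$), and from that reading off the set of exposed points of $\tilde\Lambda^*$ with exposing hyperplanes interior to $\mathrm{dom}\,\tilde\Lambda$. This is where the geometry of the Legendre transform of a piecewise-defined convex function must be handled with precision; everything else is a fairly mechanical invocation of Theorem \ref{tfn} and the G\"artner--Ellis theorem (in its sub-probability / Baldi variant, since $Z_n(nA)$ are not probability measures but have total mass $e^{n\Lambda(0)+o(n)}$).
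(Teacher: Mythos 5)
Your approach -- obtain $\tfrac1n\log\tilde Z_n(t)\to\tilde\Lambda(t)$ from Theorem \ref{tfn} and then invoke the G\"artner--Ellis theorem, with part (a) corresponding to the essentially smooth case and part (b) to the general case where the lower bound is restricted to exposed points -- is exactly the route the paper takes; the paper in fact gives no proof at all for Theorem \ref{tld} beyond the remark ``Applying Theorem \ref{tfn} and the G\"artner--Ellis theorem, we obtain the following large deviation results.'' Your observation that one must apply the (Baldi-type) version for unnormalised measures, because $\tilde\Lambda(0)=\Lambda(0)=\mathbb{E}\log m_0>0$ so the measures $Z_n(n\cdot)$ are not probability measures, is a correct and useful point that the paper leaves implicit. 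Likewise the observation that a.s.\ convergence for each rational $t$ plus convexity in $t$ upgrades to simultaneous a.s.\ convergence for all $t$ is the right way to fix a single full-measure event.

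There is, however, a genuine slip in your case analysis. In your treatment of case~I you write ``here $\Lambda(t)<0$ for all $t$, which forces $t_-=-\infty$ and $t_+=+\infty$, so $\bar\Lambda=\Lambda$ everywhere \dots and $\tilde\Lambda\equiv\Lambda$ is differentiable everywhere.'' This cannot be: $\Lambda(0)=\mathbb{E}\log m_0>0$ rules out $\Lambda<0$ everywhere, and if $\Lambda$ were negative everywhere you would have $\tilde\Lambda=\max\{\Lambda,0\}\equiv 0$, not $\tilde\Lambda\equiv\Lambda$, so the two halves of your sentence also contradict each other. The confusion stems from the paper's own formulation of the case split, which appears to contain a typo: since $\Lambda$ is convex with $\Lambda(0)>0$, the sublevel set $\{\Lambda<0\}$ is an interval not containing $0$, so $t_1=-\infty$ and $t_2=+\infty$ simultaneously is impossible; case~I must be read as $\{\Lambda<0\}=\emptyset$ (i.e.\ $t_1=+\infty$, $t_2=-\infty$, $\Lambda\geq 0$ on all of $\mathbb{R}$). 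Under that reading, $\bar\Lambda\geq 0$ everywhere (since $\Lambda'(t_+)\geq 0$ and $\Lambda'(t_-)\leq 0$ when $\Lambda\geq 0$), hence $\tilde\Lambda=\bar\Lambda$, which is $C^1$ because $\bar\Lambda$ pastes $\Lambda$ with its tangent lines at $t_\pm$; essential smoothness then follows and part (a) is the standard G\"artner--Ellis conclusion, possibly after a short continuity argument to replace $\inf_{G\cap\mathcal E}\tilde\Lambda^*$ by $\inf_G\tilde\Lambda^*$ at the non-exposed endpoints $\Lambda'(t_\pm)$. In cases II and III the kink of $\tilde\Lambda=\max\{\bar\Lambda,0\}$ occurs at $t_2$ (resp.\ $t_1$), which is why the exposed-point interval is $(\Lambda'(t_2),\Lambda'(t_+))$ (resp.\ $(\Lambda'(t_-),\Lambda'(t_1))$); note that the paper's statement ``for case~I'' in part (b) should read ``for case~II.'' You would need to redo the case~I computation along these lines; the rest of your proposal is sound and is essentially the paper's intended argument.
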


\section{Sketch of proof} 
The proofs  will be based on the convergence of the sub-martingale $W_n(t)$. The following proposition describes the a.s. and $L^p$-convergence of $W_n(t)$ under the probability $\mathbb P_{\xi, Y}$ as well as its $L^p$-convergence rate.
For $t\in\mathbb R$ fixed, set 
$f_t(x)=\frac{1}{x}\Lambda(xt)-\Lambda(t)$.

\begin{pr}\label{lp}
Let $p>1$. 
If $\Lambda(t)>0$, $\mathbb E\log^+\frac{Y_0(t)}{m_0(t)}<\infty$, $f_t(p)<0$ and $\mathbb E\log^+ \mathbb E_\xi \bar Z_1(t)^p<\infty$, then
$\sup_n\mathbb E_{\xi, Y}W_n(t)^p<\infty$ a.s.,
so that $W_n(t)$ converges a.s. and in $L^p$ under $\mathbb P_{\xi, Y}$ to some limit $W(t)$ which satisfies the decomposition
\begin{equation}\label{dl}
W(t)=\bar W(t)+\sum_{k=1}^\infty \Pi_k(t)^{-1}\sum_{i=1}^{V_{k-1}}\bar W(0_{k-1}i,t)e^{tS_{0_{k-1}i}},
\end{equation} 
where the notation $\bar W(u,t)$ denotes the limit of the intrinsic non-negative martingale $\bar W_n(u,t)$ of the BRWRE originated from the particle $u\in\mathbb U$, and the $L^p$-convergence rate is a.s.,
\begin{equation}\label{ecr}
\limsup_{n\rightarrow\infty}\frac{1}{n}\log \left(\mathbb E_{\xi, Y}|W_{n+1}(t)-W_n(t)|^p\right)^{1/p}\leq
\left\{\begin{array}{ll}
\max\{-\Lambda(t),f_t(p)\}&\text{if $p\in(1,2]$,}\\
\max\{-\Lambda(t),f_t(p), f_t(2)\}&\text{if $p> 2$.}
\end{array}
\right.
\end{equation}
\end{pr}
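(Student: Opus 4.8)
The plan is to decompose $W_{n+1}(t)-W_n(t)$ into a ``branching'' part coming from the descendants of particles already present at generation $n$ and an ``immigration'' part coming from the immigrants entering at time $n$, and to estimate each part separately. Concretely, writing every particle $u\in\mathbb T_{n+1}$ either as the child of some $v\in\mathbb T_n$ or as a descendant of an immigrant $0_ni$, one gets
\begin{equation*}
W_{n+1}(t)-W_n(t)=\sum_{v\in\mathbb T_n}\frac{e^{tS_v}}{\Pi_{n+1}(t)}\Bigl(\sum_{i=1}^{N(v)}e^{tL_i(v)}-m_n(t)\Bigr)+\Pi_{n+1}(t)^{-1}\sum_{i=1}^{V_n}e^{tS_{0_ni}}.
\end{equation*}
For the first sum I would condition on $\mathcal F_n$ (the $\sigma$-field generated by the first $n$ generations together with $\xi$ and $Y$): the inner increments $\sum_i e^{tL_i(v)}-m_n(t)$ are then centred and i.i.d.\ over $v$ under $\mathbb P_{\xi,Y}$, so a conditional Marcinkiewicz--Zygmund / von Bahr--Esseen inequality (for $p\in(1,2]$) or Rosenthal's inequality (for $p>2$) bounds the $p$-th conditional moment in terms of $W_n(tp)\,$-type quantities and $m_n(t)^{-p}$. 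The second sum is a pure immigration term of the same shape as the increments in the series \eqref{dl}, and its $p$-th moment is governed by $\mathbb E_\xi \bar Z_1(t)^p$ together with the factor $\Pi_{n+1}(t)^{-p}\bigl(\sum_i e^{tS_{0_ni}}\bigr)^p$, which is controlled by the hypotheses $\mathbb E\log^+\frac{Y_0(t)}{m_0(t)}<\infty$ and $\mathbb E\log^+\mathbb E_\xi\bar Z_1(t)^p<\infty$.

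The exponential growth rates are then read off from the asymptotics of the normalising constants. The key deterministic fact is that, by the ergodic theorem applied to $\log m_n(t)$ and $\log m_n(tp)$, one has $\frac1n\log\Pi_n(t)\to\Lambda(t)$ and $\frac1n\log\Pi_n(tp)\to\Lambda(tp)$ a.s.; hence $\frac1n\log\bigl(\Pi_n(tp)\Pi_n(t)^{-p}\bigr)\to\Lambda(tp)-p\Lambda(t)=p f_t(p)$. Combining this with the uniform boundedness $\sup_n\mathbb E_{\xi,Y}W_n(t)^p<\infty$ (which must be established first, by the same Rosenthal-type recursion showing the partial sums of \eqref{dl} are Cauchy in $L^p$), the branching term contributes rate $\max\{-\Lambda(t),f_t(p)\}$ when $p\in(1,2]$ and $\max\{-\Lambda(t),f_t(p),f_t(2)\}$ when $p>2$ — the extra $f_t(2)$ appearing because Rosenthal's inequality produces a second-moment term $\bigl(\mathbb E_{\xi,Y}(\cdots)^2\bigr)^{p/2}$ whose rate is $\tfrac p2\bigl(\Lambda(2t)-2\Lambda(t)\bigr)=\tfrac p2 f_t(2)$. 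The immigration term contributes rate $-\Lambda(t)$, since each summand carries a factor $\Pi_{k}(t)^{-1}$ decaying like $e^{-k\Lambda(t)}$ and the tail of the series \eqref{dl} beyond generation $n$ is therefore of order $e^{-n\Lambda(t)}$ in $L^p$; this is subsumed in the stated maxima. The almost-sure $L^p$ convergence of $W_n(t)$ to the limit $W(t)$ with the representation \eqref{dl} follows because $(W_n(t))$ is an $L^p$-bounded $\mathbb P_{\xi,Y}$-submartingale, and identifying the limit as \eqref{dl} is a matter of passing to the limit term-by-term in the immigration decomposition, each $\bar W_n(0_{k-1}i,t)$ converging to its limit $\bar W(0_{k-1}i,t)$ by the BRWRE martingale convergence theorem.

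The main obstacle is the passage from conditional ($\mathbb P_{\xi,Y}$) moment bounds to the \emph{almost sure} statement in the random environment: the Rosenthal/von Bahr--Esseen constants are deterministic, but the quantities they multiply — $\mathbb E_\xi\bar Z_1(t)^p$, $m_n(t)$, $m_n(tp)$, $Y_n(t)$ — are stationary ergodic sequences whose logarithms must be shown to have the right Cesàro behaviour simultaneously for the relevant exponents, and one must check that the ``$\log^+$'' moment hypotheses are exactly what is needed to make the ergodic theorem applicable and to guarantee $f_t(p)<0$ forces genuine geometric decay rather than merely subexponential decay. A secondary technical point is handling $p>2$ cleanly: one needs the recursive Rosenthal bound to close, i.e.\ to verify that the second-moment contribution does not itself require a separate bound on $\sup_n\mathbb E_{\xi,Y}W_n(t)^2$ that is not already covered — this is why the condition $f_t(2)<0$ is implied by $f_t(p)<0$ for $p>2$ (convexity of $\Lambda$ makes $x\mapsto f_t(x)$ have the right monotonicity on the relevant range), a point that should be remarked on explicitly.
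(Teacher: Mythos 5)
Your one-step decomposition of the increment,
\begin{equation*}
W_{n+1}(t)-W_n(t)=\sum_{v\in\mathbb T_n}\frac{e^{tS_v}}{\Pi_{n+1}(t)}\Bigl(\sum_{i=1}^{N(v)}e^{tL_i(v)}-m_n(t)\Bigr)+\Pi_{n+1}(t)^{-1}Y_n(t),
\end{equation*}
is correct, and the plan (condition on $\mathcal F_n$, apply von Bahr--Esseen for $p\in(1,2]$, Rosenthal for $p>2$) is sound. The paper instead differences the immigration-history decomposition \eqref{dss} term by term, applies Minkowski over the immigration generations $k$, and then invokes a Burkholder-type increment bound for each ordinary BRWRE martingale $\bar W_\cdot(0_{k-1}i,t)$. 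Although the two decompositions look quite different, they lead to the \emph{same} intermediate inequality: after applying von Bahr--Esseen and expanding $\mathbb E_{\xi,Y}\tilde Z_n(pt)=\Pi_n(pt)\bigl(1+\sum_{k\le n}\Pi_k(pt)^{-1}Y_{k-1}(pt)\bigr)$, pulling out the $1/p$-th power with $\bigl(\sum a_k\bigr)^{1/p}\le\sum a_k^{1/p}$, and using $Y_{k-1}(pt)^{1/p}\le Y_{k-1}(t)$, one recovers exactly the paper's bound
$\bigl(\mathbb E_{T^n\xi}|\bar W_1(t)-1|^p\bigr)^{1/p}\sum_{k=0}^n\Pi_k(t)^{-1}Y_{k-1}(t)\gamma_{k,n}(p,t)+\Pi_{n+1}(t)^{-1}Y_n(t)$,
with $\gamma_{k,n}(p,t)=\prod_{i=k}^{n-1}m_i(t)^{-1}m_i(pt)^{1/p}$. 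So your route is a legitimate alternative derivation with the same payoff; the paper's route has the advantage of plugging directly into the existing BRWRE increment estimates (cited as \cite[Lemma 2.1]{wh19}), while yours is self-contained at the cost of having to expand $\mathbb E_{\xi,Y}\tilde Z_n(pt)$ by hand. Your remark that convexity of $\Lambda$ gives $f_t(p)<0\Rightarrow f_t(2)<0$ when $p>2$ (since $f_t(1)=0$ and the chord argument forces $f_t$ negative on $(1,p]$) is correct and worth stating.

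Two small misstatements. First, the moment hypothesis $\mathbb E\log^+\mathbb E_\xi\bar Z_1(t)^p<\infty$ controls the \emph{branching} term, via $\mathbb E_{T^n\xi}\bigl|\sum_i e^{tL_i}-m_n(t)\bigr|^p$; the immigration term $\Pi_{n+1}(t)^{-1}Y_n(t)$ is deterministic under $\mathbb P_{\xi,Y}$ and needs only $\mathbb E\log^+\frac{Y_0(t)}{m_0(t)}<\infty$ (together with $\frac1n\log\Pi_n(t)\to\Lambda(t)$) to get the rate $-\Lambda(t)$, so you have attributed $\mathbb E_\xi\bar Z_1(t)^p$ to the wrong summand. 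Second, the logical order: you say $\sup_n\mathbb E_{\xi,Y}W_n(t)^p<\infty$ ``must be established first,'' but this is not needed for the $p\in(1,2]$ case, because the von Bahr--Esseen step only calls on the first moment $\mathbb E_{\xi,Y}W_n(pt)$, which is explicit. The paper (correctly) proves the rate bound \eqref{ecr} first and then reads off $L^p$-boundedness from the telescoping sum $W_n(t)=W_0(t)+\sum_{k<n}(W_{k+1}(t)-W_k(t))$ and Minkowski; only for $p>2$ does an induction on the exponent enter, because Rosenthal produces a term $\mathbb E_{\xi,Y}W_n(2t)^{p/2}$ requiring the result at a lower exponent and a different argument $2t$.
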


\begin{proof} We first prove  \eqref{ecr}, which implies $\sup_n{\mathbb{E}_{\xi,Y} W_n(t)<\infty}$ a.s. Here we just provide the proof for $p\in(1,2]$; for $p>2$, \eqref{ecr} can be obtained by a technique of induction.ee
According to the structure of the family tree $\mathbb{T}$, we decompose 
\begin{equation} \label{dss}
W_n(t)=\bar W_n(t)+\sum_{k=1}^{n} \Pi_k(t)^{-1}\sum_{i=1}^{V_{k-1}}\bar W_{n-k}(0_{k-1}i,t)\mathrm{e}^{tS_{0_{k-1}i}}.
\end{equation}
%By \eqref{dss},
%\begin{align*}
%W_{n+1}(t)-W_{n}(t)=\sum_{k=0}^{n}{\Pi_k(t)}^{-1} \sum_{i=1}^{V_{k-1}} \mathrm{e}^{tS_{0_{k-1}i}}\left[\bar{W}_{n+1-k}(0_{k-1}i,t) \right. \left. -\bar{W}_{n-k}(0_{k-1}i,t)\right]
%+{\Pi_{n+1}(t)}^{-1} Y_{n}(t),
%\end{align*}
Set $Y_{-1}=(1,0,0,\cdots)$ and $\bar W_n(0_{-1}i,t)=\bar W_n(t)$.
Using Minkowski's inequality and Burholder's inequality, we get
\begin{equation*} \label{CRE 2.2.26}
\left(\mathbb{E}_{\xi,Y}\left| W_{n+1}(t)-W_n(t) \right|^p \right)^{{1}/{p}} \leq C \left( \left(\mathbb{E}_{T^n\xi}\left|\bar{W}_1(t)-1\right|^p\right)^{{1}/{p}} \sum_{k=0}^{n}\Pi_k(t)^{-1} Y_{k-1}(t)\gamma_{k,n}(p,t) +\Pi_{n+1}(t)^{-1} Y_n(t)\right),
\end{equation*} 
where $\gamma_{k,n}(p,t)=\prod_{i=k}^{n-1}[m_i(t)^{-1}m_i(pt)^{1/p}]$, $C>0$ is a general constant and
the notation $T$ represents the shift operator: $T \xi$ =$\left(\xi_1,\xi_{2},\cdots \right)$ if $\xi=\left(\xi_0,\xi_1,\cdots\right)$. Applying \cite[Lemma 2.1]{wh19} leads to
\begin{equation*} \label{CRE 2.2.28}
\limsup_{n\rightarrow \infty} \frac{1}{n} \log \left[\left(\mathbb{E}_{T^n\xi}\left|\bar{W}_1(t)-1\right|^p\right)^{{1}/{p}}\sum_{k=0}^{n} \Pi_k(t)^{-1} Y_{k-1}(t) \gamma_{k,n}(p,t)\right]  \leq \max \left\{-\Lambda(t), f_t(p)\right\} \qquad a.s.
\end{equation*}
Besides, it is not difficult to see that 
$\limsup_{n\rightarrow \infty} \frac{1}{n} \log [\Pi_{n+1}(t)^{-1} Y_n(t)]\leq -\Lambda(t)$ a.s.
So \eqref{ecr} is proved. In order to obtain the decomposition
\eqref{dl}, we can prove that 
\begin{equation} \label{CRE 2.2.18}
\sum_{k=1}^{\infty}\Pi_k(t)^{-1} \sum_{i=1}^{V_{k-1}}\mathrm{e}^{tS_{0_{k-1}i}} \sup_n\bar W_n(0_{k-1}i,t)  < \infty \qquad a.s.
\end{equation}
Then let $n$ tend to infinity in  \eqref{dss} and use the dominated convergence theorem.
\end{proof}

\begin{proof}[Proof of Theorem \ref{tclt}]
By Proposition \ref{lp}, $W=lim_{n\rightarrow\infty}W_n$ exists and satisfies the decomposition $W=\bar W(0)+\sum_{k=1}^\infty \Pi_k^{-1}{V_{k-1}}\bar W(0_{k-1}i,0)$.
Let $\Psi_n(t)=\Pi_n^{-1}\tilde Z_n(\mathbf{i}t)$ be the characteristic function of the measure $\Pi_n^{-1} Z_n$.
It suffices to show that $\Psi_n(t/b_n)\rightarrow g(t) W$ a.s., where $g(t)$ is the characteristic function of the standard normal distribution. Using \eqref{dss} with $t$ replaced by $\mathbf{i}t/b_n$, we have
\begin{equation}\label{CTL delta0}
\Psi_n\left({t}/{b_n}\right) = \bar{\Psi}_n\left({t}/{b_n}\right) + \sum_{k=1}^{n} \Pi_k^{-1} \sum_{i=1}^{V_{k-1}} \bar{\Psi}_{n-k}\left(0_{k-1}i,{t}/{b_n}\right) \mathrm{e}^{\textbf{i}\frac{t}{b_n}S_{0_{k-1}i}},
\end{equation} 
where $\bar{\Psi}_{n-k}(0_{k-1}i,t)=\prod_{l=k}^{n-1}[m_l^{-1}m_l(\mathbf i t)]\bar W_{n-k}(0_{k-1}i,\mathbf{i}t)$. Since $\bar \Psi_n(t/b_n)\rightarrow g(t) \bar W(0)$  a.s. by  the central limit theorem for BRWRE (see \cite[Theorem 10.1]{hl14}),   we can prove that  $\bar{\Psi}_{n-k}\left(0_{k-1}i,{t}/{b_n}\right)\rightarrow g(t)\bar W(0_{k-1}i,0)$ a.s. Noticing \eqref{CRE 2.2.18}, we  let $n$ tend to infinity in  \eqref{CTL delta0} and use the dominated convergence theorem.
\end{proof}

\begin{proof}[Proof of Theorem \ref{MDP}]
Let $\Gamma_n(t) = \log \frac{\int \mathrm{e}^{tx} Z_n(a_n dx )}{Z_n(\mathbb{R})},$ then 
\begin{equation}\label{MDP1.4.1}
\frac{n}{a_n^2} \Gamma_n\left(\frac{a_n}{n}t\right) = \frac{n}{a_n^2} \log\frac{\Pi_n(\frac{a_n}{n}t)}{\Pi_n}+\frac{n}{a_n^2} \left[\log W_n\left(\frac{a_n}{n}t\right)-\log W_n\right].
\end{equation}	
By \cite[Lemma 6.1]{huang20}, $\lim_{n \rightarrow\infty} \frac{n}{a_n^2} \log [\Pi_n^{-1}\Pi_n(\frac{a_n}{n}t)] = \frac{1}{2} \sigma^2 t^2$ a.s.
Now we shall prove that $W_n(t)$ converges uniformly a.s. to the limit $W(t)$ on the interval $D_\varepsilon=[-\varepsilon,\varepsilon]$ for some $\varepsilon >0$. Notice that 
	$$\sup_{t\in D_\varepsilon} \left|W_{n+1}(t) - W_n(t)\right| \leq \left|W_{n+1} - W_n\right| + \int_{D_\varepsilon} \left|W'_{n+1}(t) - W'_n(t)\right|dt.$$ By  Proposition \ref{lp}, it can be seen that $\sum_n |W_{n+1} - W_n|<\infty$ a.s. It remains to show that \begin{equation} \label{UCI4.1.1}
	\sum_{n} \sup_{t\in D_\varepsilon}\left( \mathbb{E}_{\xi,Y} \left|W'_{n+1}(t) - W'_n(t)\right|^{p}\right)^{{1}/{p} }< \infty \qquad a.s.
	\end{equation}
for suitable $p \in (1,2]$. By the derivative of the decomposition \eqref{dss}  and using Minkowski's inequality, we obtain that  
	%\begin{equation*}
   % W'_n(t)=\bar W'_n(t)+\sum_{k=1}^{n} \Pi_k(t)^{-1}\sum_{i=1}^{V_{k-1}}\mathrm{e}^{tS_{0_{k-1}i}}\left[\bar W_{n-k}(0_{k-1}i,t)\left(S_{0_{k-1}i} - \sum_{l=0}^{k-1}\frac{m'_l(t)}{m_l(t)}\right) + \bar{W}'_{n-k}(0_{k-1}i,t)\right].
	%\end{equation*}
%By Minkowski's inequality, we obtain that 
	\begin{eqnarray*}  
	\sup_{t\in D_\varepsilon}\left( \mathbb{E}_{\xi,Y} \left|W'_{n+1}(t) - W'_n(t)\right|^{p}\right)^{{1}/{p}}
	 &\leq & \sum_{k=0}^{n} \sup_{t\in D_\varepsilon}\left\{\Pi_{k}(t)^{-1} Y_{k-1}(t) \left(\mathbb{E}_{T^k\xi} \left|\bar W'_{n+1-k}(t) - \bar W'_{n-k}(t)\right|^{p}\right)^{{1}/{p}}\right\}\\
	&& + \sum_{k=1}^{n} \sup_{t\in D_\varepsilon}\left\{\Pi_{k}(t)^{-1} \tilde{Y}_{k-1}(t) \left(\mathbb{E}_{T^k\xi} \left|\bar W_{n+1-k}(t) - \bar W_{n-k}(t)\right|^{p}\right)^{{1}/{p}}\right\}+ \sup_{t\in D_\varepsilon}\left\{\Pi_{n+1}(t)^{-1} \tilde{Y}_n(t)\right\}\\
	&=:& I_{1,n}+I_{2,n}+I_{3,n},
	\end{eqnarray*} 
where $\tilde Y_n(t)=\sum_{i=1}^{V_{n-1}} \mathrm{e}^{tS_{0_{n-1}i}} |S_{0_{n-1}i} - \sum_{l=0}^{n-1} m_l(t)^{-1}{m'_l(t)}|$. By tedious calculations, it can be seen that $\sum_n I_{i,n}<\infty$ a.s., $i=1,2,3$, then \eqref{UCI4.1.1} is proved.

\end{proof}

\begin{proof}[Proof of Theorem \ref{tfn}]
Since $\tilde Z_n(t)\geq\max\{\bar Z_n(t), Y_n(t)\}$, by \eqref{LTE3.3},  $$\liminf_{n\rightarrow\infty}\frac{1}{n}\log \tilde Z_n(t)\geq \max\{\liminf_{n\rightarrow\infty}\frac{1}{n}\log\bar Z_n(t), \liminf_{n\rightarrow\infty}\frac{1}{n}\log Y_n(t)\}=\max\{\bar \Lambda(t),0\}=\tilde \Lambda(t)\quad a.s.$$
It remains to show the superior limit. For $s>\max\{ \Lambda(t),0\}$,  we have  $\mathbb E_{\xi, Y}(\sum_n e^{-ns}\tilde Z_n(t))<\infty$ a.s., which implies that $\limsup_{n\rightarrow\infty}\frac{1}{n}\log \tilde Z_n(t)\leq \max\{\Lambda(t),0\}$ a.s. For $t\in(t_-,t_+)$, $\Lambda(t)=\bar \Lambda(t)$. So it remains to consider the case $t\geq t_+$ and $t\leq t_-$. In the following, we just provide the proof for the case $t\geq t_+$; the proof for $t\leq t_-$ is similar. 
Let $t\geq t_+$. Notice that $\limsup_{n\rightarrow\infty}\frac{1}{n}\log \mathbb E_{\xi, Y}\tilde Z_n(t)\leq \max\{\Lambda(t),0\}$. Denote $\Lambda_+^*(x)= \sup_{t\in \mathbb{R}} \{ xt -  \max\{\Lambda (t),0\}\}$  the Legendre transform of
$\max\{\Lambda (t),0\}$. By \cite[Theorem 4.5.3]{z}, for $a>\max\{\Lambda'(t_+),0\}$,
$\limsup_{n\rightarrow\infty}\frac{1}{n}\log \mathbb E_{\xi, Y}\tilde Z_n(t)\leq -\Lambda_+^*(a)<0$ a.s.,
which leads to $\sum_n \mathbb P_{\xi, Y}(Z_n[na,\infty)\geq1)<\infty$ a.s. Therefore, by Borel-Cantelli Lemma, $Z_n[na,\infty)=0$ a.s., so that $R_n<na$ a.s., where $R_n:=\max\{S_u: u\in \mathbb T_n\}$ is the rightmost position of generation $n$. For $t_0\in(t_+,t]$, noticing the fact that $\tilde Z_n(t)\leq \tilde Z_n(t_0)\exp\{(t-t_0)\frac{R_n}{n}\}$, we deduce that $\limsup_{n\rightarrow\infty}\frac{1}{n}\log \tilde Z_n(t)\leq \max\{\Lambda(t_0),0\}+(t-t_0)a$ a.s. The proof is finished by letting $a\downarrow \max\{\Lambda'(t_+),0\}$ and $t_0\uparrow t_+$.
% since $\max\{\Lambda(t_+),0\}+(t-t_+)\max\{\Lambda'(t_+),0\}=\max\{t\Lambda'(t_+),0\}$

\end{proof}

%% The Appendices part is started with the command \appendix;
%% appendix sections are then done as normal sections
%% \appendix

%% \section{}
%% \label{}

%% If you have bibdatabase file and want bibtex to generate the
%% bibitems, please use
%%
%%  \bibliographystyle{elsarticle-num}
%%  \bibliography{<your bibdatabase>}

%% else use the following coding to input the bibitems directly in the
%% TeX file.

\end{document}